\documentclass[12pt]{amsart}
\usepackage{amsmath, amsfonts, amssymb, amsthm,hyperref,mathtools,array}
\usepackage[T1]{fontenc}
\hypersetup{hypertex=true,
	colorlinks=true,
	linkcolor=blue,
	anchorcolor=blue,
	citecolor=blue}
\usepackage{bm}
\allowdisplaybreaks[4]
\def\ord{{\rm ord}}

\def\u{{\bm u}}

\def\0{{\bm 0}}

\def\alg{{\rm alg}}

\def\bchi{{\bm \chi}}

\def\Ack{\medskip\noindent {\bf Acknowledgments}}

\theoremstyle{plain}
\newtheorem{theorem}{Theorem}[section]
\newtheorem{lemma}{Lemma}

\theoremstyle{definition}

\theoremstyle{remark}
\newtheorem{remark}{Remark}

\makeatletter
\@namedef{subjclassname@2020}{%
	\textup{2020} Mathematics Subject Classification}
\makeatother
\vspace{4mm}

\begin{document}
	
	\title[On Diophantine $m$-tuples related to primitive elements of finite fields]
	{On Diophantine $m$-tuples related to primitive elements of finite fields}
	\author[H.-L. Wu]{Hai-Liang Wu}
	
	\address {(Hai-Liang Wu) School of Science, Nanjing University of Posts and Telecommunications, Nanjing 210023, People's Republic of China}
	\email{\tt whl.math@smail.nju.edu.cn}

	\keywords{Diophantine tuples, character sums, finite fields.
		\newline \indent 2020 {\it Mathematics Subject Classification}. Primary 11T24, 11L40; Secondary 11T30.
		\newline \indent This research was supported by the Natural Science Foundation of China (Grant No. 12101321) and the Natural Science Foundation of the Higher Education Institutions of Jiangsu Province (Grant No. 25KJB110010).}
	
	\begin{abstract}
		Inspired by recent works on Diophantine tuples over finite fields, in this paper we consider Diophantine tuples related to primitive elements of finite fields. 	Let $\mathbb{F}_q$ be the finite field with $q$ elements and $\mathbb{F}_q^*=\mathbb{F}_q\setminus\{0\}$ be the multiplicative cyclic group of all non-zero elements over $\mathbb{F}_q$. An element $g\in\mathbb{F}_q$ is called primitive if $g$ generates the group $\mathbb{F}_q^*$. A set $\{x_1,x_2,\cdots,x_m\}\subseteq\mathbb{F}_q^*$ of $m$ elements is said to be a $\mathcal{P}$-Diophantine $m$-tuple over $\mathbb{F}_q$ if $x_ix_j+1$ is primitive for any $1\le i\le j\le m$. Let $N_m$ denote the number of $\mathcal{P}$-Diophantine tuples over $\mathbb{F}_q$. Then we obtain the asymptotic formula
		$$m!\cdot N_m=\left(\frac{\varphi(q-1)}{q-1}\right)^{m(m+1)/2}q^m+O_{m,r}\left(q^{m-\frac{1}{2}+r}\right),$$
		where $\varphi(\cdot)$ is the Euler totient function and $r\in(0, 1/2)$ is an arbitrary real number. Moreover, we prove that there exists a $\mathcal{P}$-Diophantine $m$-tuple over $\mathbb{F}_q$ whenever $q\ge \exp(\exp(m(m+1)))$. 
	\end{abstract}
	\maketitle
	
	\section{Introduction}
	\setcounter{lemma}{0}
	\setcounter{theorem}{0}
	\setcounter{equation}{0}
	\setcounter{conjecture}{0}
	\setcounter{remark}{0}
	\setcounter{corollary}{0}
	
	\subsection{Notation} In this paper, $q$ denotes a prime power and $\mathbb{F}_q$ is the finite field with $q$ elements. Let $\mathbb{F}_q^*=\mathbb{F}_q\setminus\{0\}$ be the cyclic group of all non-zero elements over $\mathbb{F}_q$. An element $g\in\mathbb{F}_q$ is said to be a primitive element of $\mathbb{F}_q$ if $g$ is a generator of $\mathbb{F}_q^*$. The set of all primitive elements of $\mathbb{F}_q$ is written as $\mathcal{P}_q$. 
	
	Also, we use $\widehat{\mathbb{F}_q^*}$ to denote the cyclic group of all multiplicative characters of $\mathbb{F}_q$, and let $\varepsilon$ denote the trivial character. Given any $\chi\in\widehat{\mathbb{F}_q^*}$, we additionally define $\chi(0)=0$ and let $\ord(\chi)$ be the order of $\chi$. 
	
	Finally, $\#S$ denotes the cardinality of a set $S$. 
	
	\subsection{Background and motivation} A set $\{x_1,x_2,\cdots,x_m\}\subseteq\mathbb{Z}^+$ of $m$ distinct positive integers is said to be a {\it Diophantine $m$-tuple} if $x_ix_j+1$ is a perfect square for any $1\le i<j\le m$. 
	
	The study of Diophantine $m$-tuples has a long and rich history. The first Diophantine $4$-tuple 
	$$\{1,3,8,120\}$$ 
	was found by Fermat. Later Euler extended this to a rational Diophantine $5$-tuple
	$$\left\{1,3,8,120, \frac{777480}{8288641}\right\}.$$
	A challenging conjecture in this topic is that there does not exist a Diophantine $5$-tuple. In 2004, Dujella made a breakthrough on this conjecture. Dujella \cite{Dujella04} showed that there are only finitely many Diophantine $5$-tuples and there are no Diophantine $6$-tuples. This long-standing conjecture was finally confirmed by He, Togb\'e and Ziegler \cite{HTZ} in 2019. Readers may refer to a recent book of Dujella \cite{Dujella24} for a comprehensive introduction to this topic.
	
	Inspired by the above results, we are naturally led to consider a generalization of the definition of Diophantine $m$-tuples to other algebraic structures. In this paper, we mainly focus on the Diophantine tuples over the finite field $\mathbb{F}_q$. A set $\{x_1,\cdots,x_m\}\subseteq\mathbb{F}_q^*$ of $m$ distinct non-zero elements is called a Diophantine $m$-tuple over $\mathbb{F}_q$ if $x_ix_j+1$ is a square over $\mathbb{F}_q$ for any $1\le i<j\le m$. Moreover, Dujella and Petri\v{c}evi\'c \cite{DP} posed the definition of {\it strong Diophantine $m$-tuples} over $\mathbb{F}_q$. More precisely, a set $\{x_1,\cdots,x_m\}\subseteq\mathbb{F}_q^*$ is said to be a strong Diophantine $m$-tuple if $x_ix_j+1$ is a square over $\mathbb{F}_q$ for any $1\le i\le j\le m$. Recently, research on Diophantine 
	$m$-tuples over finite fields has been very active. For example, Dujella and Kazalicki \cite{DK} showed that 
	$$m! \cdot N_m(\mathbb{F}_p)=2^{-m(m-1)/2}p^m+o(p^m),$$
	where $p$ is a prime and $N_m(\mathbb{F}_p)$ is the number of Diophantine $m$-tuples over $\mathbb{F}_p$. Later Mani and Rubinstein-Salzedo \cite{MR} extended this to an arbitrary finite field $\mathbb{F}_q$ and improved the error term, that is,
	$$m! \cdot N_m(\mathbb{F}_q)=2^{-m(m-1)/2}q^m+O(q^{m-1/2}).$$
	Recently, Shparlinski \cite{Shparlinski} further improved the error term and obtain 
	$$m! \cdot N_m(\mathbb{F}_q)=2^{-m(m-1)/2}q^m+O(q^{m-1}).$$
	From these results, it is clear that Diophantine $m$-tuples exist if $q$ is sufficiently large relative to $m$. By using an inductive method, Dujella and Kazalicki \cite[Theorem 17]{DK} proved that there is at least one Diophantine $m$-tuple over $\mathbb{F}_p$ whenever $p>2^{2m-2}m^2$. Recently, the generalizations of this type of problem have received considerable attention. Readers may refer to \cite{Gu,KYY,TY}. 
	
	On the other hand, the primitive elements are also important objects of study over finite fields, and their combinatorial properties have been extensively studied. For examples, Cohen and his collaborators \cite{Cohen15} showed that for any odd prime power $q>169$, there is an element $x\in\mathbb{F}_q$ such that $x,x+1,x+2$ are all primitive elements of $\mathbb{F}_q$. For more results concerning the combinatorial properties of primitive elements, readers may refer to \cite{Cohen03,Cohen10,Cohen15,Cohen21}.
	
	Motivated by the above results, in this paper, we introduce primitive elements into the study of Diophantine tuples. Analogous to the definition of strong Diophantine $m$-tuples posed by Dujella and Petri\v{c}evi\'c \cite{DP}, a set $\{x_1,x_2,\cdots,x_m\}\subseteq\mathbb{F}_q^*$ of $m$ distinct elements is called a {\it $\mathcal{P}$-Diophantine $m$-tuple} over $\mathbb{F}_q$ if $x_ix_j+1\in\mathcal{P}_q$ for any $1\le i\le j\le m$. 
	
	Let's introduce some examples here. With the help of a computer, one can verify  the following examples:
	
	\begin{itemize}
		\item for $p=11$, we have 
		$$\mathcal{P}_{11}=\{2,6,7,8\}\subseteq\mathbb{F}_{11}^*,$$
		and $\{1,7\}\subseteq \mathbb{F}_{11}^*$ is a $\mathcal{P}$-Diophantine $2$-tuple over $\mathbb{F}_{11}$;

		\item for $p=17$, we have $$\mathcal{P}_{17}=\{3,5,6,7,10,11,12,14\}\subseteq\mathbb{F}_{17}^*,$$
		and $\{2,3,15\}\subseteq\mathbb{F}_{17}^*$ is a $\mathcal{P}$-Diophantine $3$-tuple over $\mathbb{F}_{17}$;
		
		\item for $p=47$, we have 
		$$\mathcal{P}_{47}=\{5,10,11,13,15,19,20,22,23,26,29,30,31,33,35,38,39,40,41,43,44,45\}\subseteq\mathbb{F}_{47}^*,$$
		and $\{2,5,34,42\}\subseteq\mathbb{F}_{47}^*$ is a $\mathcal{P}$-Diophantine $4$-tuple over $\mathbb{F}_{47}$.
	\end{itemize}
	
	\subsection{The main theorem} Now we state our main result.
	
	\begin{theorem}\label{Thm. A}
		Let $m\ge 2$ be an integer and $N_m$ be the number of $\mathcal{P}$-Diophantine $m$-tuples over $\mathbb{F}_q$. Then, for any real number $r\in(0,1/2)$ we have the asymptotic formula
		$$m!\cdot N_m=\left(\frac{\varphi(q-1)}{q-1}\right)^{m(m+1)/2}q^m+O_{m,r}\left(q^{m-\frac{1}{2}+r}\right),$$
		where $\varphi(\cdot)$ is the Euler totient function. Moreover, let $M(m)=\exp(\exp(m(m+1)))$. Then, for any finite field $\mathbb{F}_q$ with $q\ge M(m)$, there exists a $\mathcal{P}$-Diophantine $m$-tuple over $\mathbb{F}_q$. 
	\end{theorem}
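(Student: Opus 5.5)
We start from the standard indicator of primitive elements (Vinogradov's formula): for $y\in\mathbb{F}_q$,
$$\mathbf{1}_{\mathcal{P}_q}(y)=\frac{\varphi(q-1)}{q-1}\sum_{d\mid q-1}\frac{\mu(d)}{\varphi(d)}\sum_{\ord(\chi)=d}\chi(y),$$
where $\chi(0)=0$ makes the right side vanish at $y=0$. Count ordered $m$-tuples $(x_1,\dots,x_m)\in(\mathbb{F}_q^*)^m$ with pairwise distinct entries and $x_ix_j+1\in\mathcal{P}_q$ for all $1\le i\le j\le m$; this count equals $m!\cdot N_m$. The distinctness condition and $x_i\neq 0$ remove only $O_m(q^{m-1})$ tuples, so we may sum over all of $\mathbb{F}_q^m$. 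Expanding the product over the $m(m+1)/2$ pairs $(i,j)$ gives
$$\left(\frac{\varphi(q-1)}{q-1}\right)^{m(m+1)/2}\sum_{(d_{ij})}\prod_{i\le j}\frac{\mu(d_{ij})}{\varphi(d_{ij})}\sum_{(\chi_{ij}),\,\ord(\chi_{ij})=d_{ij}} S(\bchi),$$
with
$$S(\bchi)=\sum_{x\in\mathbb{F}_q^m}\prod_{i\le j}\chi_{ij}(x_ix_j+1).$$
The term with all $\chi_{ij}=\varepsilon$ contributes the main term $q^m+O(q^{m-1})$, since trivial characters vanish only where some $x_ix_j=-1$.

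For a nontrivial $\bchi$, choose an index $i$ with some $\chi_{ij}\neq\varepsilon$ and fix all other coordinates. The inner sum in $x_i$ is a mixed character sum $\sum_{x_i}\prod_j\chi_{ij}(x_jx_i+1)$ over the linear polynomials $x_jx_i+1$, together with $\chi_{ii}(x_i^2+1)$. Weil's bound gives $O_m(\sqrt q)$ whenever the product is not of the form $\chi(h(x_i))^{\ord\chi}$-degenerate. Degeneracy can occur only on a lower-dimensional set of the fixed variables: some $x_j=0$, or $x_j=x_k$ for $j\ne k$ (merging roots), or $x_j^2=-1$-type coincidences with the roots $\pm\sqrt{-1}$ of $x_i^2+1$, which may cancel the characters. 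That set has $O_m(q^{m-2})$ points, and there we use the trivial bound $q$. Hence $|S(\bchi)|\ll_m q^{m-1/2}$.

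The main obstacle is summing these errors over all characters. The weight is
$$\sum_{d\mid q-1}\frac{|\mu(d)|}{\varphi(d)}\cdot\varphi(d)=2^{\omega(q-1)},$$
taken to the power $m(m+1)/2$. Using $2^{\omega(n)}\ll_\epsilon n^{\epsilon}$ with $\epsilon=2r/(m(m+1))$ gives the error term $O_{m,r}(q^{m-1/2+r})$, which is why the arbitrary $r$ appears. One needs care that the degenerate exceptional set does not carry too many characters. Our plan is to bound the degenerate contribution uniformly by $q^{m-1}$ times the same divisor weight, which still fits inside the error term.

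For the existence claim, make the constants explicit. Degeneracy and Weil give $|S(\bchi)|\le c_m q^{m-1/2}$ with $c_m$ of size $\sim m^2$, and the explicit bound $2^{\omega(n)}\le$ something like $n^{1/4}$ for large $n$ is available. Combine these with $\varphi(n)/n\gg 1/\log\log n$. Positivity then follows once
$$(\log\log q)^{-m(m+1)/2}\sqrt q > c\,2^{\omega(q-1)m(m+1)/2}.$$
Writing $W=2^{\omega(q-1)}$ and using $W\le \exp(O(\log q/\log\log q))$, this inequality holds when $\log q\ge e^{m(m+1)}$, which gives $M(m)$. Checking the explicit constants, and bounding $q^{m-1}$ correctly against $\mathbb{F}_q^*$ and distinctness, is routine but the place where bookkeeping errors are most likely.
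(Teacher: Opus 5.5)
Your argument for the asymptotic formula is correct and is essentially the paper's. It uses Vinogradov's expansion, a Weil bound in one variable off an $O_m(q^{m-2})$ exceptional set of the frozen variables (uniformly in $\bchi$), the weight $\sum_{\bchi}|c_{\bchi}|=W_{q-1}^{m(m+1)/2}$, and $W_{q-1}\ll_\epsilon q^{\epsilon}$ with $\epsilon=2r/(m(m+1))$. Dropping the restriction to $\mathcal{D}_m$ at the level of indicator functions is a legitimate shortcut for the paper's comparison of $S^*(\bchi)$ with $S(\bchi)$. In characteristic $2$ you should add that $T^2+1=(T+1)^2$ creates no extra degeneracy, because the relevant order $d\mid q-1$ is odd.

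The gap is in the existence part: the explicit threshold $M(m)=\exp(\exp(m(m+1)))$ is the whole content there, and your sketch does not deliver it.

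\begin{itemize}
\item A bound $\log W_{q-1}=O(\log q/\log\log q)$ with an unspecified constant cannot give $\log\log q\ge m(m+1)$.
\item The bound $2^{\omega(n)}\le n^{1/4}$ is useless here: it only gives $W_{q-1}^{m(m+1)/2}\le q^{m(m+1)/8}$, which already exceeds $\sqrt q$ for $m=2$.
\item What makes $M(m)$ work is Robin's explicit bound $\omega(n)<1.3841\log n/\log\log n$, together with $1.3841\log 2\approx 0.9594<1$. For $\log\log q\ge m(m+1)$ this yields $W_{q-1}^{m(m+1)/2}<q^{0.4797}$. The leftover factor $q^{0.0203}\ge e^{0.0203\,e^{m(m+1)}}$ is what pays for the constant in $\sqrt q\ge (m+1)W_{q-1}^{m(m+1)/2}$.
\end{itemize}

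Your criterion $(\log\log q)^{-m(m+1)/2}\sqrt q>c\,W_{q-1}^{m(m+1)/2}$ also spends this thin margin on $\theta_{q-1}^{m(m+1)/2}$, and at $m=2$ there is not enough of it. For $q$ near $M(2)$, Robin only guarantees $\frac12\log q-3\log W_{q-1}\ge 0.0203\,e^{6}\approx 8.2$. The explicit form of $\varphi(n)/n\gg 1/\log\log n$ (Rosser--Schoenfeld) gives $1/\theta_{q-1}<e^{C}\log\log(q-1)+2.50637/\log\log(q-1)\approx 11.1$ there. That costs up to $3\log 11.1\approx 7.2$, leaving less than $\log 3$ for $\log c$. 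So with $c\ge 3$, let alone $c\sim m^2$, your inequality cannot be certified there.

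The remedy is to notice that $\theta_{q-1}^{m(m+1)/2}$ multiplies the main term and every character-sum error alike, so it should never be played against $W_{q-1}^{m(m+1)/2}$. There are two ways to use this:
\begin{itemize}
\item Keep the restriction to $\mathcal{D}_m$ inside $S(\bchi)$, as the paper does, so that $\theta_{q-1}^{m(m+1)/2}$ factors out of $m!\,N_m$ entirely.
\item In your setup, observe that the $O_m(q^{m-1})$ discarded tuples only have to be beaten by $\theta_{q-1}^{m(m+1)/2}$ times a quantity $\gg q^{m-1/2}W_{q-1}^{m(m+1)/2}$. This is harmless, since $\theta_{q-1}\ge 2^{-\omega_{q-1}}=W_{q-1}^{-1}$ makes that product at least of size $q^{m-1/2}$.
\end{itemize}
Positivity then reduces to $\sqrt q\ge (m+1)W_{q-1}^{m(m+1)/2}$ plus an elementary inequality such as $(m+1)2^{m(m+1)/2}>2m^2+7m$.
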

	
	\subsection{Outline of this paper} In Section 2, we will introduce some necessary lemmas. We shall prove our theorem in Section 3.
	
	\section{Preliminaries}
	\setcounter{lemma}{0}
	\setcounter{theorem}{0}
	\setcounter{equation}{0}
	\setcounter{conjecture}{0}
	\setcounter{remark}{0}
	\setcounter{corollary}{0}
	
	Let $\mathcal{P}_q$ be the set of all primitive elements of $\mathbb{F}_q$. We begin with the characteristic function $1_{\mathcal{P}_q}$ of $\mathcal{P}_q$ (cf. \cite{Cohen03,Cohen10,Cohen15,Cohen21}). 
	
	\begin{lemma}\label{Lem. characteristic function of P}
		Let $\mathcal{P}_q$ be the set of all primitive elements of the finite field $\mathbb{F}_q$. Then 
		$$1_{\mathcal{P}_q}(x)=\theta_{q-1}\sum_{d\mid q-1}\frac{\mu(d)}{\varphi(d)}\sum_{\substack{\chi\in\widehat{\mathbb{F}_q^*} \\ \ord(\chi)=d}} \chi(x)=\begin{cases}
		1  & \mbox{if}\ x\in\mathcal{P}_q,\\
		0 & \mbox{otherwise},
		\end{cases}$$
		where $\mu(\cdot)$ is the M\"obius function, $\varphi(\cdot)$ is the Euler totient function, and $\theta_{q-1}=\varphi(q-1)/(q-1)$. 
	\end{lemma}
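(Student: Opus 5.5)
The plan is to prove the identity by evaluating the right-hand side separately in the cases $x=0$ and $x\in\mathbb{F}_q^*$. If $x=0$, then $\chi(0)=0$ for every character, including $\varepsilon$ by the convention fixed in the notation. Every inner sum therefore vanishes, and the right-hand side is $0$, which agrees with $0\notin\mathcal{P}_q$.

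Now let $x\in\mathbb{F}_q^*$. Fix a generator $\chi_1$ of the cyclic group $\widehat{\mathbb{F}_q^*}$ and write $\zeta=\chi_1(x)$. Then $\zeta$ is a root of unity whose order $e$ equals the multiplicative order of $x$, since $\chi_1$ is injective on $\mathbb{F}_q^*$. The characters of order exactly $d$ are the $\chi_1^{k(q-1)/d}$ with $\gcd(k,d)=1$. Hence
\[
S_d(x):=\sum_{\substack{\chi\in\widehat{\mathbb{F}_q^*}\\ \ord(\chi)=d}}\chi(x)=\sum_{\substack{1\le k\le d\\ \gcd(k,d)=1}}\omega^{k},
\]
where $\omega=\zeta^{(q-1)/d}$. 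The order of $\omega$ is $d/\gcd\bigl(d,(q-1)/e\bigr)$. So $S_d(x)$ is a Ramanujan sum $c_d(n)$, and the standard evaluation gives
\[
c_d(n)=\sum_{\delta\mid \gcd(d,n)}\delta\,\mu(d/\delta).
\]

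Instead of grinding through Ramanujan sums, the cleaner route is multiplicativity. Put $n=q-1$ and $f(d)=\frac{\mu(d)}{\varphi(d)}S_d(x)$. Only squarefree $d$ contribute. For squarefree $d=\prod p$, a character of order $d$ is uniquely a product of characters of prime orders $p\mid d$. So $S_d(x)=\prod_{p\mid d}S_p(x)$, and the total sum factors as
\[
\sum_{d\mid n}f(d)=\prod_{p\mid n}\Bigl(1-\frac{S_p(x)}{p-1}\Bigr).
\]
For a prime $p$, $S_p(x)=\sum_{\chi^p=\varepsilon}\chi(x)-1$. By orthogonality, the full sum over characters of order dividing $p$ equals $p$ if $x$ is a $p$-th power in $\mathbb{F}_q^*$ and $0$ otherwise. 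Hence $S_p(x)=p-1$ or $-1$, and the local factor is $0$ or $\frac{p}{p-1}$ accordingly.

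The product is therefore $0$ unless $x$ is not a $p$-th power for every prime $p\mid q-1$. That condition is exactly $x\in\mathcal{P}_q$, because $x$ is a generator of the cyclic group $\mathbb{F}_q^*$ if and only if it lies in no maximal proper subgroup $(\mathbb{F}_q^*)^p$. When $x\in\mathcal{P}_q$, the product equals $\prod_{p\mid q-1}\frac{p}{p-1}=\frac{q-1}{\varphi(q-1)}$. Multiplying by $\theta_{q-1}$ gives $1$, completing the proof. The only delicate step is the factorization $S_d=\prod_{p\mid d}S_p$. It follows from $\widehat{\mathbb{F}_q^*}\cong\mathbb{Z}/(q-1)$ and the Chinese remainder theorem: the elements of order $d$ in a cyclic group correspond bijectively to tuples of elements of order $p$, one for each $p\mid d$, via multiplication, and characters are multiplicative in $\chi$ when evaluated at the fixed $x$.
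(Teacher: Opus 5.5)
Your proof is correct. The case $x=0$ properly uses the paper's convention $\varepsilon(0)=0$. The factorization $S_d(x)=\prod_{p\mid d}S_p(x)$ for squarefree $d$ is justified by splitting the cyclic group of characters of order dividing $d$ into its prime-order components. The local factors $0$ and $p/(p-1)$ are computed correctly, and even $q=2$ works, with an empty product.

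The paper gives no proof of this lemma; it only cites Cohen and coauthors. The usual derivation of this standard formula is by inclusion--exclusion. For $x\in\mathbb{F}_q^*$ one writes $1_{\mathcal{P}_q}(x)=\sum_{d\mid q-1}\mu(d)\,1_{(\mathbb{F}_q^*)^d}(x)$ and expands $1_{(\mathbb{F}_q^*)^d}(x)=\frac{1}{d}\sum_{\chi^d=\varepsilon}\chi(x)$. One then regroups by the exact order $e$ of $\chi$, using $\sum_{e\mid d\mid q-1}\mu(d)/d=\theta_{q-1}\mu(e)/\varphi(e)$.

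Your argument runs the other way. Instead of producing the weights $\mu(d)/\varphi(d)$, you take the right-hand side as given and factor it as an Euler product $\prod_{p\mid q-1}\bigl(1-S_p(x)/(p-1)\bigr)$. You then evaluate each factor by orthogonality on the index-$p$ quotient. This makes the normalization $\theta_{q-1}=\prod_{p\mid q-1}(1-1/p)$ transparent and avoids the divisor-sum identity. The inclusion--exclusion route, by contrast, explains where the coefficients come from in the first place.

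Your opening paragraph on Ramanujan sums is never used and can be dropped.
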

	
	Next we introduce the Weil bound (cf. \cite[Theorem 5.41]{LN}).
	
	\begin{lemma}\label{Lem. the Weil Bound}
		Let $\psi\in\widehat{\mathbb{F}_q^*}$ with $\ord(\psi)=d>1$, and let $f(t)\in\mathbb{F}_q[T]$ be a monic polynomial with $f(T)\neq g(T)^d$ for any $g(t)\in\mathbb{F}_q[T]$. Then, for any $a\in\mathbb{F}_q$ we have 
		$$\left|\sum_{x\in\mathbb{F}_q}\psi(af(x))\right|\le (r-1)\sqrt{q},$$
		where $r$ is the number of distinct roots of $f(T)$ in an algebraic closure $\mathbb{F}_q^{\alg}$ of $\mathbb{F}_q$. 
	\end{lemma}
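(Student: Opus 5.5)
We will not reprove the Weil bound from scratch, since the paper quotes it from \cite[Theorem 5.41]{LN}. The plan below explains how it reduces to the classical statement and outlines the standard route to that statement.

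\emph{Reduction.} If $a=0$, every term is $\psi(0)=0$, so the sum vanishes and there is nothing to prove. If $a\neq 0$, multiplicativity gives $\psi(af(x))=\psi(a)\psi(f(x))$ with $|\psi(a)|=1$, so it suffices to show
$$\Bigl|\sum_{x\in\mathbb{F}_q}\psi(f(x))\Bigr|\le (r-1)\sqrt q.$$
Write $f(T)=\prod_{i=1}^{r}(T-\alpha_i)^{e_i}$ over $\mathbb{F}_q^{\alg}$. Since $f$ is monic, the hypothesis that $f$ is not a $d$-th power over $\mathbb{F}_q$ implies that not all $e_i$ are divisible by $d$; note that a monic $f$ can be a $d$-th power over $\mathbb{F}_q^{\alg}$ only if it is one over $\mathbb{F}_q$, because the multiset of roots is Galois-stable.

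\emph{Classical approach.} Extend $\psi$ to $\mathbb{F}_{q^n}$ by setting $\psi_n=\psi\circ N_{\mathbb{F}_{q^n}/\mathbb{F}_q}$, and let $S_n=\sum_{x\in\mathbb{F}_{q^n}}\psi_n(f(x))$. The key steps are as follows.
\begin{enumerate}
\item Form the $L$-function $L(T)=\sum_{h}\psi(\mathrm{Res}(h,f))T^{\deg h}$, the sum running over monic $h\in\mathbb{F}_q[T]$. Using the Euler product over monic irreducibles, show that $T\,L'(T)/L(T)=\sum_{n\ge1}S_nT^n$.
\item Show that $L(T)$ is a polynomial of degree at most $r-1$. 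For $\deg h\ge r$, the quantity $\psi(\mathrm{Res}(h,f))$ depends only on $h$ modulo $\prod_i(T-\alpha_i)$, up to a factor that is constant in $h$. The resulting sum over residue classes is then a sum of a nontrivial character over a group, hence vanishes. Nontriviality is exactly where the condition that some $d\nmid e_i$ is used.
\item Writing $L(T)=\prod_{j=1}^{r-1}(1-\omega_jT)$, at most $r-1$ of the $\omega_j$ are nonzero and $S_1=-\sum_j\omega_j$.
\end{enumerate}

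\emph{Main obstacle.} The deep input is the Riemann hypothesis: each nonzero $\omega_j$ satisfies $|\omega_j|\le\sqrt q$. To obtain this, I would identify $L(T)$ as a factor of the zeta function of the smooth projective model of the Kummer curve $y^d=f(x)$, namely the $\psi$-isotypic part under the action $y\mapsto\zeta y$. Weil's theorem for curves, or alternatively the elementary Stepanov--Schmidt method followed by the usual ``$n\to\infty$'' amplification trick applied to $S_n=-\sum_j\omega_j^n$, then gives $|\omega_j|\le\sqrt q$. Combining this with step 3 yields $|S_1|\le (r-1)\sqrt q$, as required.
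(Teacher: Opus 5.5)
Your proposal is correct and consistent with the paper, which does not prove this lemma but simply quotes it from \cite[Theorem 5.41]{LN}. Your reduction and your $L$-function/Riemann-hypothesis outline are the standard argument behind that citation; the reduction handles $a=0$, factors out $\psi(a)$, and notes that a monic $f$ is a $d$-th power over $\mathbb{F}_q$ exactly when it is one over $\mathbb{F}_q^{\alg}$.

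The only imprecision is calling $L(T)$ a factor of the zeta function of the Kummer curve $y^d=f(x)$. That curve is reducible when $k=\gcd(d,e_1,\dots,e_r)>1$, so you should first pass to $\psi^k$ and $g$ with $f=g^k$. Also, $L(T)$ can differ from the Artin $L$-function by local factors $1-cT^{\deg P}$ with $|c|=1$, at $\infty$ or at roots with $d\mid e_i$. These contribute inverse roots of modulus $1\le\sqrt q$, so $|\omega_j|\le\sqrt q$ and the final bound are unaffected.
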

	
	For any positive integer $n$, let 
	$$\omega_n=\#\left\{p: p\mid n\ \text{and}\ p\ \text{is a prime}\right\}$$
	be the number of all distinct prime divisors of $n$. We need the following result due to Robin \cite[Theorem 11]{Robin}.
	
	\begin{lemma}\label{Lem. bound due to Robin}
		For any integer $n\ge 3$, we have 
		$$\omega_n<1.3841\frac{\log n}{\log\log n}.$$
	\end{lemma}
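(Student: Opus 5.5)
Since this lemma is quoted from Robin, the aim is only to reconstruct why it holds. The plan is to reduce it to the case where $n$ is a primorial, and then combine explicit Chebyshev-type estimates for large $n$ with a finite check for small $n$. Write $f(x)=\log x/\log\log x$ for $x>e$. A short derivative computation gives
$$f'(x)=\frac{\log\log x-1}{x(\log\log x)^2},$$
so $f$ is increasing for $x\ge e^e\approx 15.2$.

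\emph{Reduction to primorials.} Let $p_1<p_2<\cdots$ be the primes and $N_k=p_1p_2\cdots p_k$. If $\omega_n=k$, then $n\ge N_k$, because $n$ is divisible by $k$ distinct primes. For $k\ge 3$ we have $N_k\ge 30>e^e$, and monotonicity of $f$ gives $f(n)\ge f(N_k)$. So for these $n$ it suffices to prove
$$k<1.3841\, f(N_k)\qquad\text{for all } k\ge 3.$$
The cases $k\le 2$ (which include all $n<30$ with $3\le n$) are checked directly. For $n\ge 3$ we have $\log\log n>0$, and in fact $1.3841 f(n)>2$ once $n\ge 6$; the remaining values $n=3,4,5$, with $\omega_n=1$, are verified by hand.

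\emph{The primorial inequality.} Note that $\log N_k=\vartheta(p_k)$, where $\vartheta$ is Chebyshev's function. The ratio $k/f(N_k)$ tends to $1$, since $\vartheta(p_k)\sim k\log k$ and $\log\vartheta(p_k)\sim\log k$. I would handle it in two ranges.

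\begin{itemize}
\item For $k$ up to an explicit bound $K$ (a few hundred, say), compute $k\log\log N_k/\log N_k$ numerically. The maximum should occur at $k=9$, $N_9=223092870$. There $\log N_9\approx 19.223$ and $\log\log N_9\approx 2.956$, giving the ratio $\approx 1.38403$. This is exactly the source of the constant $1.3841$.
\item For $k>K$, use Rosser--Schoenfeld lower bounds of the shape $p_k>k\log k$ and $\vartheta(x)>x(1-1/\log x)$. These give $\log N_k>k\log k\,(1-1/\log k)$. Together with an upper bound $\log\log N_k\le\log k+\log\log k+o(1)$ (from $\vartheta(x)<1.01624x$ and $p_k<k(\log k+\log\log k)$), this shows the ratio is below, say, $1.3$.
\end{itemize}

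The main obstacle is making the explicit prime estimates sharp enough that the threshold $K$ is small enough for the finite computation to close the gap. The ratio decays to $1$ only slowly, like $1+O(\log\log k/\log k)$, so the constants must be tracked carefully. I would first try the standard Rosser--Schoenfeld inequalities and adjust $K$ until the two ranges overlap.
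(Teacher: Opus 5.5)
The paper gives no proof of this lemma; it imports it from Robin \cite[Theorem 11]{Robin}. Your reconstruction is correct, and it is the natural argument behind Robin's constant:
\begin{itemize}
\item $n\ge N_{\omega_n}$, together with the monotonicity of $f(x)=\log x/\log\log x$ on $[e^e,\infty)$, reduces everything to primorials;
\item the ratio $k\log\log N_k/\log N_k$ is maximal at $k=9$, with value $\approx 1.38401<1.3841$ (your numbers check out);
\item explicit Rosser--Schoenfeld bounds dispose of the tail.
\end{itemize}

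Two small simplifications are available. First, $\min_{x>e}f(x)=f(e^e)=e$, so $1.3841f(n)>3.7$ for every $n\ge 3$. This handles all $n$ with $\omega_n\le 3$ at once, with no case check. Second, $y\mapsto \log y/y$ is decreasing for $y>e$. Hence any lower bound $L_k\le\vartheta(p_k)$ with $L_k>e$ gives $k\log\log N_k/\log N_k\le k\log L_k/L_k$, and the separate upper bound on $\log\log N_k$ is unnecessary.

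Your estimate that $K$ is a few hundred is too optimistic for the crude bound $\log N_k>k(\log k-1)$. Even with the monotonicity trick, the resulting bound $(\log k+\log(\log k-1))/(\log k-1)$ only drops below $1.3841$ around $\log k\approx 9$, so $K$ is of order $10^4$. It is larger still if you bound numerator and denominator separately, as you propose. Feeding the Rosser--Schoenfeld bound $p_k>k(\log k+\log\log k-\tfrac32)$ into $\vartheta(x)>x(1-1/\log x)$ instead gives $K\approx 550$, in line with your guess. Either way the finite range is a routine computation, so this does not affect the validity of the argument.
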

	
	\begin{remark}\label{Rem. bound for W(q-1)}
		Let 
		$$W_{q-1}=\#\left\{d\mid q-1: d\ge 1\ \text{and}\ d\ \text{is square-free}\right\}$$
		be the number of positive square-free divisors of $q-1$. Then clearly $\log W_{q-1}=\omega_{q-1}\cdot \log 2$. Applying Lemma \ref{Lem. bound due to Robin}, for any prime power $q\ge 4$ we obtain 
		$$W_{q-1}=2^{\omega_{q-1}}=e^{\omega_{q-1}\cdot \log 2}\le (q-1)^{\frac{1.3841\cdot\log 2}{\log\log(q-1)}}.$$
		Thus, for any positive real number $r$, as $q\rightarrow\infty$, we have 
		\begin{equation}\label{Eq. bound for W(q-1)}
			W_{q-1}\ll_r q^r.
		\end{equation}
	\end{remark}
	
	We conclude this section with a classical result due to Rosser and Schoenfeld \cite[Theorem 15]{RS}.
	
	\begin{lemma}\label{Lem. on low bound for theta}
		For any positive integer $n\ge 3$, we have 
		$$\frac{n}{\varphi(n)}<e^C\cdot \log\log n+\frac{2.50637}{\log\log n},$$
		where $C$ is the Euler constant. 
	\end{lemma}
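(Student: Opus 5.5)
The plan is to reduce to primorials and then combine explicit forms of Mertens' third theorem and Chebyshev's bound. I would not re-derive the underlying analytic estimates (the explicit prime-number-theorem error terms); I would take them in the explicit form established by Rosser and Schoenfeld.

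\textbf{Step 1 (reduction to primorials).} Write
$$\frac{n}{\varphi(n)}=\prod_{p\mid n}\Bigl(1-\frac1p\Bigr)^{-1}.$$
If $n$ has $k=\omega_n$ distinct prime factors, then replacing these primes by the first $k$ primes $p_1<\dots<p_k$ does not decrease the product. It also does not increase the integer: $N_k:=p_1\cdots p_k\le n$. Now set $f(y)=e^C y+2.50637/y$. Since $f'(y)=e^C-2.50637/y^2>0$ for $y>\sqrt{2.50637/e^C}\approx 1.19$, $f$ is increasing there. Hence
$$\frac{n}{\varphi(n)}\le\frac{N_k}{\varphi(N_k)}\quad\text{and}\quad f(\log\log N_k)\le f(\log\log n)$$
whenever $\log\log N_k\ge 1.19$, i.e. $N_k\ge 27$, i.e. $k\ge 3$. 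So for $k\ge3$ it suffices to prove the inequality for $n=N_k$. The cases $k\le 2$, and small $n$ where $\log\log n$ is small or even negative, form a finite or elementary check. For example, for $n$ a prime power or of the form $2^a3^b$, one has $n/\varphi(n)\le 3$, and for $n\ge 3$ one compares this with $f(\log\log n)$ directly. Around $n\approx 15$--$16$, $f$ is decreasing and large, so these cases go through easily.

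\textbf{Step 2 (primorials with $p_k$ large).} Let $x=p_k$, so that $\log N_k=\vartheta(x)$ is Chebyshev's function. I would invoke the explicit Mertens bound
$$\prod_{p\le x}\Bigl(1-\frac1p\Bigr)^{-1}<e^C\log x\Bigl(1+\frac{1}{2\log^2x}\Bigr)$$
together with a lower bound $\vartheta(x)>x\bigl(1-\frac{1}{\log x}\bigr)$ valid for $x\ge 41$. The second bound gives
$$\log\log N_k=\log\vartheta(x)\ge \log x+\log\Bigl(1-\frac1{\log x}\Bigr)\ge \log x-\frac{1}{\log x}-\frac{1}{\log^2x}.$$
Substituting and writing $L=\log x$, the required inequality reduces to an elementary one in $L$ of the shape
$$e^C\Bigl(\frac{1}{2L}+\frac1L+\frac1{L^2}\Bigr)\lesssim \frac{2.50637}{L-1/L}.$$
Since $e^C\cdot 1.5\approx 2.67$ slightly exceeds $2.50637$, the crude version fails. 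I would therefore use the sharper lower bound for $\vartheta(x)$ available for large $x$, namely $\vartheta(x)>x\bigl(1-\frac{1}{40\log x}\bigr)$ for $x\ge 678407$. With it the loss in $\log\log N_k$ becomes $O(1/(40L))$, and the inequality holds with room to spare once $x$ exceeds a fixed explicit threshold.

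\textbf{Step 3 (finite check, and the main obstacle).} For primes $p_k$ below that threshold I would verify the inequality numerically over the primorials $N_k$. The constant $2.50637$ is exactly tuned so that equality is nearly attained at $N_9=2\cdot3\cdot5\cdots23=223092870$; this is why the constant cannot be improved and why the finite check is unavoidable. The main obstacle is calibrating the explicit bounds in Step 2 so that the analytic range and the computable range overlap. The first thing I would try is the Rosser--Schoenfeld estimates for $\vartheta$ and for Mertens' product with their published constants, accepting a computer verification of all primorials with $p_k$ up to roughly $10^6$. That verification is cheap, since one only tracks the running values of $\sum_{p\le x}\log p$ and $\prod_{p\le x}(1-1/p)^{-1}$.
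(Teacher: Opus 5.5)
The paper does not prove this lemma; it quotes it as \cite[Theorem 15]{RS} and uses it only in Part 5 to get $1/\theta_{q-1}\ll\log\log q$. You instead reconstruct a proof, and the outline is sound.

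The reduction to primorials is correct. The $i$-th smallest prime factor of $n$ is at least $p_i$, we have $N_k\le n$, and $f(y)=e^Cy+2.50637/y$ is increasing for $y>1.19$. You are also right that $\vartheta(x)>x(1-1/\log x)$ is too weak, because $1.5e^C>2.50637$. With $\vartheta(x)>x\bigl(1-\frac{1}{40\log x}\bigr)$ and $-\log(1-u)\le u/(1-u)$, the large-$x$ case reduces exactly to $e^C\bigl(\frac12+\frac{L}{40L-1}\bigr)<2.50637$. The left side is about $0.94$, so this holds for every $p_k\ge 678407$. The computation is therefore precisely over the roughly $5.5\cdot 10^4$ primorials with $p_k<678407$.

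The citation buys brevity. Your route buys transparency about where $2.50637$ comes from, namely the primorial $N_9$. It is not essentially more self-contained, though, since it still rests on Rosser and Schoenfeld's explicit Mertens-product and $\vartheta$ bounds plus a machine check.

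Two small corrections. First, the small cases need no case analysis. We have $\min_{y>0}f(y)=2\sqrt{2.50637\,e^C}\approx 4.226$, while $n/\varphi(n)\le 15/4$ whenever $n$ has at most three distinct prime factors. So all such $n$ are automatic, and $\log\log n>0$ for every $n\ge 3$, so no negative case arises. Second, the margin at $N_9$ is tiny. The constant actually required there is about $2.5063693$, so the slack in $n/\varphi(n)$ is only about $2\cdot 10^{-7}$. The numerical verification must therefore be done with controlled rounding; double precision suffices, but you should say so. Likewise, ``cannot be improved'' should read ``cannot be improved beyond its last digit.''
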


    \section{Proof of Theorem \ref{Thm. A}}
	\setcounter{lemma}{0}
	\setcounter{theorem}{0}
	\setcounter{equation}{0}
	\setcounter{conjecture}{0}
	\setcounter{remark}{0}
	\setcounter{corollary}{0}
	
	We begin with the following lemma.
	
	\begin{lemma}\label{Lem. an inequality in proof of Thm. A}
		Let $m\ge 2$ be an integer and $q$ a prime power. Suppose $q\ge \exp(\exp(m(m+1)))$. Then 
		$$\sqrt{q}\ge (m+1)\cdot W_{q-1}^{m(m+1)/2}.$$
	\end{lemma}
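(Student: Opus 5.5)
Write $k=m(m+1)/2$, so $m(m+1)=2k$, and set $L=\log\log q\ge 2k$. The claim is $\sqrt q\ge (m+1)W_{q-1}^{k}$. Taking logarithms, it is enough to show
$$k\log W_{q-1}+\log(m+1)\le \tfrac12\log q.$$
I will bound $W_{q-1}$ using Remark \ref{Rem. bound for W(q-1)}, which rests on Lemma \ref{Lem. bound due to Robin}. Since $q\ge e^{e^{6}}$, we certainly have $q-1\ge 3$. That remark then gives
$$\log W_{q-1}\le \frac{1.3841\log 2\,\log(q-1)}{\log\log(q-1)}\le \frac{0.96\log q}{\log\log(q-1)},$$
because $1.3841\log 2<0.9594$.

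The one technical point is to replace $\log\log(q-1)$ by $\log\log q$. Since $q$ is astronomically large, $\log(q-1)\ge \log q-2/q$. Hence $\log\log(q-1)\ge L-\delta$ with $\delta$ negligible, say $\delta<10^{-6}$. Alternatively, one can avoid this step: the function $x\mapsto \log x/\log\log x$ is increasing for $x\ge e^e$, so
$$\frac{\log(q-1)}{\log\log(q-1)}\le \frac{\log q}{\log\log q}$$
directly. I would use this monotonicity argument, which gives cleanly
$$k\log W_{q-1}\le \frac{0.96\,k\log q}{L}\le \frac{0.96\,k\log q}{2k}=0.48\log q.$$

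It remains to absorb $\log(m+1)$ into the leftover $0.02\log q$. We have $\log q=e^{L}\ge e^{2k}\ge e^{6}$. Moreover $\log(m+1)\le m\le 2k$, and $2k\le 0.02\,e^{2k}$ once $2k\ge 6$ or so; since $m\ge 2$ gives $2k\ge 6$, this holds, because $0.02e^6\approx 8.07>6$. Increasing $k$ only helps, as $e^{2k}$ grows faster than $2k$. Therefore
$$k\log W_{q-1}+\log(m+1)\le 0.48\log q+0.02\log q=\tfrac12\log q,$$
and exponentiating gives the lemma.

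The main obstacle is constant bookkeeping: the slack between $0.48$ and $0.5$ must cover $\log(m+1)$, and the tightest case is $m=2$. There the check is $\log 3\approx 1.1\le 0.02e^6\approx 8$, which holds comfortably.
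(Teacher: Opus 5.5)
Your proof is correct and is essentially the paper's argument. Both take logarithms, bound $\log W_{q-1}=\omega_{q-1}\log 2$ using Robin's inequality together with the monotonicity of $\log x/\log\log x$ for $x\ge e^e$, and use $\log\log q\ge m(m+1)$ to get $\frac{m(m+1)}{2}\log W_{q-1}\le 0.48\log q$. Both then absorb $\log(m+1)$ into the remaining slack; the only difference there is cosmetic bookkeeping (the paper checks $2\log(m+1)/e^{m(m+1)}\le 0.03$, while you use $\log(m+1)\le m(m+1)\le 0.02\,e^{m(m+1)}$).
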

	
	\begin{proof}
		First we claim that 
		\begin{equation}\label{Eq. a in the proof of Lem. an inequality in proof of Thm. A}
			\frac{2\log(m+1)}{e^{m(m+1)}}\le 0.03
		\end{equation}
		for any integer $m\ge 2$. In fact, by a computer t is easy to verify that (\ref{Eq. a in the proof of Lem. an inequality in proof of Thm. A}) holds for $m=2$. Suppose now $m\ge 3$. As $\log(m+1)\le m$ and $e^{m(m+1)}=(e^{m+1})^m\ge 2^{4m}=16^m$, we obtain
		$$\frac{2\log(m+1)}{e^{m(m+1)}}\le a_m=\frac{2m}{16^m}$$
		for any $m\ge 3$. Noting that 
		$$a_{m+1}/a_m=\frac{1}{16}\left(1+\frac{1}{m}\right)<1,$$
		by the above we have 
		$$\frac{2\log(m+1)}{e^{m(m+1)}}\le a_m\le a_3=6/16^3\le 0.03.$$
		Thus, the inequality (\ref{Eq. a in the proof of Lem. an inequality in proof of Thm. A}) holds. On the other hand, let $f(x)=\log x/(\log\log x)$. Then it is easy to see that $f(x)$ is strictly increasing for all $x>e^e$ (note that $15\le e^e\le 16$). Thus, by Lemma \ref{Lem. bound due to Robin} we obtain 
		\begin{equation}\label{Eq. b in the proof of Lem. an inequality in proof of Thm. A}
			\omega_{q-1}<1.3841\frac{\log (q-1)}{\log\log (q-1)}<1.3841\frac{\log q}{\log\log q}
		\end{equation}
		whenever $q\ge 17$. 
		
		Suppose $q\ge \exp(\exp(m(m+1)))\ge 17$. Then, by (\ref{Eq. a in the proof of Lem. an inequality in proof of Thm. A}) and (\ref{Eq. b in the proof of Lem. an inequality in proof of Thm. A}) one can verify that 
		\begin{align*}
			1
&\ge 0.03+0.96\\
&\ge \frac{2\log(m+1)}{e^{m(m+1)}}+1.9188\cdot \frac{m(m+1)/2}{m(m+1)}\\
&\ge \frac{2\log(m+1)}{\log q}+1.9188\cdot \frac{m(m+1)/2}{\log\log q}\\
&\ge \frac{2\log(m+1)}{\log q}+\log 2\cdot 1.3841\cdot 2 \cdot \frac{\log q\cdot m(m+1)/2}{\log q\cdot \log\log q}\\
&\ge \frac{2\log(m+1)}{\log q}+\log 2\cdot \omega_{q-1}\cdot 2 \cdot \frac{m(m+1)/2}{\log q}\\
&=\frac{2\log(m+1)}{\log q}+2\log W_{q-1} \cdot \frac{m(m+1)/2}{\log q}.
		\end{align*}
	This implies 
	$$\frac{\log q}{2}-\log(m+1)-\frac{m(m+1)}{2}\log W_{q-1}\ge 0,$$
	i.e., 
	$$\sqrt{q}\ge (m+1)\cdot W_{q-1}^{m(m+1)/2}.$$
	This completes the proof.		
	\end{proof}
	
	The following notations will be used frequently later. For any positive integer $r$, let 
	$$\mathcal{D}_r=\left\{(x_1,x_2,\cdots,x_r): x_1,x_2,\cdots,x_r\in\mathbb{F}_q^*\ \text{are mutually distinct}\right\}.$$
	Also, we use the symbol ${\bchi}=(\chi_{ij})_{1\le i\le j\le m}$ (or simply $\bchi=(\chi_{ij})$) to denote the formal upper triangular matrix
	$$\begin{pmatrix}
		\chi_{11}   &  \chi_{12}    &  \cdots  &  \chi_{1m}\\
		0                 &  \chi_{22}   &  \cdots  &  \chi_{2m}\\
		\vdots       & \vdots          &  \ddots  &  \vdots\\
		0                &      0              &  \cdots   &  \chi_{mm}\\
 	\end{pmatrix},$$
 	where $\chi_{ij}\in\widehat{\mathbb{F}_q^*}$ for any $1\le i\le j\le m$. In particular, let 
 	$$\bchi_0=\begin{pmatrix}
 	\varepsilon  &  \varepsilon   &  \cdots  &  \varepsilon\\
 		0                &  \varepsilon  &  \cdots   &  \varepsilon\\
 		\vdots       & \vdots            &  \ddots  &  \vdots\\
 		0                &      0                 &  \cdots   &  \varepsilon\\
 	\end{pmatrix},$$
  The set of all these formal upper triangular matrices is written as $\mathcal{M}$. 
 	
 	Now we are in a position to prove our theorem.
	
	{\bfseries\noindent Proof of Theorem \ref{Thm. A}}. We will divide the proof into six parts.
	
	{\bfseries\noindent Part 1}: Setup of the counting function.
	
	By Lemma \ref{Lem. characteristic function of P} one can verify that 
	$$1_{\mathcal{P}_q}(x)=\theta_{q-1}\sum_{d\mid q-1}\frac{\mu(d)}{\varphi(d)}\sum_{\substack{\chi\in\widehat{\mathbb{F}_q^*} \\ \ord(\chi)=d}} \chi(x)=\theta_{q-1}\sum_{\chi\in\widehat{\mathbb{F}_q^*}}c_{\chi}\cdot\chi(x),$$
	where $c_{\chi}=\mu(\ord(\chi))/\varphi(\ord(\chi))$. Thus, if $N_m$ denotes the number of $\mathcal{P}$-Diophantine $m$-tuples over $\mathbb{F}_q$, then 
	\begin{align}\label{Eq. setup of the counting function}
		m!\cdot N_m
&=\sum_{(x_1, \cdots, x_m)\in\mathcal{D}_m}\prod_{1\le i\le j\le m}1_{\mathcal{P}_q}(x_ix_j+1)\notag\\
&=\theta_{q-1}^{m(m+1)/2}\sum_{(x_1, \cdots, x_m)\in\mathcal{D}_m}\prod_{1\le i\le j\le m}\sum_{\chi\in\widehat{\mathbb{F}_q^*}}\chi(x_ix_j+1)\notag\\
&=\theta_{q-1}^{m(m+1)/2}\sum_{(x_1, \cdots, x_m)\in\mathcal{D}_m}\sum_{\bchi=(\chi_{ij})\in\mathcal{M}}\prod_{1\le i\le j\le m}c_{\chi_{ij}}\cdot\chi_{ij}(x_ix_j+1)\notag\\
&=\theta_{q-1}^{m(m+1)/2}\sum_{\bchi=(\chi_{ij})\in\mathcal{M}}\prod_{1\le i\le j\le m}c_{\chi_{ij}}\sum_{(x_1, \cdots, x_m)\in\mathcal{D}_m}\prod_{1\le i\le j\le m}\chi_{ij}(x_ix_j+1)\notag\\
&=\theta_{q-1}^{m(m+1)/2}\sum_{\bchi=(\chi_{ij})\in\mathcal{M}}c_{\bchi}\cdot S(\bchi),
	\end{align}
where 
$$c_{\bchi}=\prod_{1\le i\le j\le m}c_{\chi_{ij}},$$
and 
$$S(\bchi)=\sum_{(x_1, \cdots, x_m)\in\mathcal{D}_m}\prod_{1\le i\le j\le m}\chi_{ij}(x_ix_j+1).$$
	
    {\bfseries\noindent Part 2}: An estimate for $S(\bchi_0)$.
	
	Let $(\mathbb{F}_q)^m=\{(x_1,\cdots,x_m): x_1,\cdots,x_m\in\mathbb{F}_q\}$ and define 
	\begin{align*}
		E_1&=\left\{(x_1,\cdots,x_m)\in(\mathbb{F}_q)^m: x_i=0\ \text{for some}\ 1\le i\le m\right\},\\
		E_2&=\left\{(x_1,\cdots,x_m)\in(\mathbb{F}_q)^m: x_i=x_j\ \text{for some}\ 1\le i<j\le m\right\},\\
		E_3&=\left\{(x_1,\cdots,x_m)\in(\mathbb{F}_q)^m: x_ix_j+1=0\ \text{for some}\ 1\le i<j\le m\right\},\\
		E_4&=\left\{(x_1,\cdots,x_m)\in(\mathbb{F}_q)^m: x_i^2+1=0\ \text{for some}\ 1\le i\le m\right\}.
	\end{align*}
	Then clearly $\#E_1\le mq^{m-1}$, $\#E_2\le m(m-1)q^{m-1}/2$, $\#E_3\le m(m-1)q^{m-1}/2$, and $\#E_4\le 2mq^{m-1}$. Applying this to $S(\bchi_0)$, we obtain 
	\begin{align}\label{Eq. estimate for the main term}
		S(\bchi_0)
&=\sum_{(x_1, \cdots, x_m)\in\mathcal{D}_m}\prod_{1\le i\le j\le m}\varepsilon(x_ix_j+1)\notag\\
&=\#(\mathbb{F}_q)^m\setminus\bigcup_{1\le j\le 4}E_j\notag\\
&\ge \#(\mathbb{F}_q)^m-\sum_{\le j\le 4}\#E_j\notag\\
&\ge q^m-q^{m-1}(m^2+2m).
	\end{align}
	
{\bfseries\noindent Part 3}: An estimate for $S(\bchi)$ with $\bchi\neq \bchi_0$. 
	
Given an element $\bchi=(\chi_{ij})_{1\le i,j\le m}\in\mathcal{M}\setminus\{\bchi_0\}$, let 
$$t=\max\left\{1\le k\le m: \chi_{ik}\neq\varepsilon\ \text{for some $1\le i\le k$ or}\  \chi_{kj}\neq\varepsilon\ \text{for some $k\le j\le m$} \right\}.$$

We first consider the sum 
\begin{equation}\label{Eq. definition of S star}
	S^*(\bchi)=\sum_{(x_1, \cdots, x_t)\in\mathcal{D}_t}\prod_{1\le i\le j\le t}\chi_{ij}(x_ix_j+1)\sum_{x_{t+1}, \cdots, x_m\in\mathbb{F}_q}1.
\end{equation}
Suppose $t=1$. Then 
$$S^*(\bchi)=q^{m-1}\sum_{x_1\in\mathbb{F}_q^*}\chi_{11}(x_1^2+1),$$
where $\chi_{11}\in\widehat{\mathbb{F}_q^*}$ is a non-trivial character. If $2\mid q$, then $T^2+1=(T+1)^2$ and $\ord(\chi_{11})$ is odd since $\ord(\chi_{11})\mid q-1$. If $2\nmid q$, then the polynomial $T^2+1$ has two distinct roots in $\mathbb{F}_q^{\alg}$. Thus, $T^2+1\neq g(T)^{\ord(\chi_{11})}$ for any $g(t)\in\mathbb{F}_q[T]$. Applying Lemma \ref{Lem. the Weil Bound}, we obtain 
$$|S^*(\bchi)|\le q^{m-1}\left|-1+\sum_{x_1\in\mathbb{F}_q}\chi_{11}(x_1^2+1)\right|\le q^{m-1}+q^{m-1/2}.$$

Now suppose $t\ge 2$. We call a vector $\u=(x_1, x_2, \cdots, x_{t-1})\in\mathcal{D}_{t-1}$ good if $x_i^2+1\neq 0$ for any $1\le i\le t-1$; otherwise $\u$ is said to be bad. Let $\mathcal{D}_{t-1}^{\rm good}$ be the set of all good vectors and $\mathcal{D}_{t-1}^{\rm bad}$ be the set of all bad vectors. Then it is clear that 
\begin{equation}\label{Eq. number of good tuples}
	\#\mathcal{D}_{t-1}^{\rm good}=\#\left\{\u=(x_1,\cdots,x_{t-1})\in\mathcal{D}_{t-1}: \u\ \text{is good}\right\}\le q^{t-1},
\end{equation}
and 
\begin{equation}\label{Eq. number of bad tuples}
	\#\mathcal{D}_{t-1}^{\rm bad}=\#\left\{\u=(x_1,\cdots,x_{t-1})\in\mathcal{D}_{t-1}: \u\ \text{is bad}\right\}\le 2(t-1)q^{t-2}.
\end{equation}
For a good $\u=(x_1,\cdots, x_{t-1})\in\mathcal{D}_{t-1}$, let $d$ be the least common multiple of 
$$\ord(\chi_{1t}),\ord(\chi_{2t}),\cdots,\ord(\chi_{tt}).$$
Since at least one of $\chi_{1t},\cdots,\chi_{tt}$ is non-trivial, we have $d>1$.  Hence, fixing a character $\psi$ with $\ord(\psi)=d$, for any $1\le i\le t$ we may set $\chi_{it}=\psi^{b_i}$ for some integer $0\le b_i\le d-1$. Since $b_1,b_2,\cdots,b_t$ are not all zero and $x_i^2+1\neq 0$ for any $1\le i\le t-1$, we see that 
$$-x_1^{-1}, \cdots, -x_{t-1}^{-1}\in\mathbb{F}_q^{\alg}\setminus\{\pm\sqrt{-1}\}$$ 
are mutually distinct, where $\sqrt{-1}\in\mathbb{F}_q^{\alg}$ with $(\sqrt{-1})^2=-1\in\mathbb{F}_q$. When $b_t>0$, as in the case $t=1$, it is easy to verify that $(T^2+1)^{b_t}\neq h(T)^d$ for any $h(T)\in\mathbb{F}_q[T]$. Applying the above discussions, we see that 
$$f_{\u}(T)=(T^2+1)^{b_t}\prod_{1\le i\le t-1}(x_iT+1)^{b_i}\neq g(T)^d$$
for any $g(T)\in\mathbb{F}_q[T]$. Thus, applying Lemma \ref{Lem. the Weil Bound}, for good vector $\u=(x_1,\cdots,x_{t-1})$ we have 
\begin{equation}\label{Eq. bound for sum related to good tuples}
	\left|\sum_{x_t\in\mathbb{F}_q}\prod_{i=1}^t\chi_{it}(x_ix_t+1)\right|
=\left|\sum_{x_t\in\mathbb{F}_q}\prod_{i=1}^t\psi^{b_i}(x_ix_t+1)\right|\\
=\left|\sum_{x_t\in\mathbb{F}_q}\psi(f_{\u}(x_t))\right|\\
\le tq^{1/2}.
\end{equation}
From (\ref{Eq. definition of S star}) one can verify that 
\begin{align*}
	\left|S^*(\bchi)\right|
&=q^{m-t}\left|\sum_{(x_1, \cdots, x_t)\in\mathcal{D}_t}\prod_{1\le i\le j\le t}\chi_{ij}(x_ix_j+1)\right|\\
&=q^{m-t}\left|\sum_{(x_1, \cdots, x_{t-1})\in\mathcal{D}_{t-1}}\prod_{1\le i\le j\le t-1}\chi_{ij}(x_ix_j+1)\sum_{x_t\not\in\{0,x_1,\cdots,x_{t-1}\}}\prod_{1\le i\le t}\chi_{it}(x_ix_t+1)\right|\\
&\le q^{m-t}\sum_{(x_1, \cdots, x_{t-1})\in\mathcal{D}_{t-1}}\left|\sum_{x_t\not\in\{0,x_1,\cdots,x_{t-1}\}}\prod_{1\le i\le t}\chi_{it}(x_ix_t+1)\right|\\
&=S_{\rm good}^*+S_{\rm bad}^*,
\end{align*}
where 
$$S_{\rm good}^*=q^{m-t}\sum_{(x_1, \cdots, x_{t-1})\in\mathcal{D}_{t-1}^{\rm good}}\left|\sum_{x_t\not\in\{0,x_1,\cdots,x_{t-1}\}}\prod_{1\le i\le t}\chi_{it}(x_ix_t+1)\right|,$$
and 
$$S_{\rm bad}^*=q^{m-t}\sum_{(x_1, \cdots, x_{t-1})\in\mathcal{D}_{t-1}^{\rm bad}}\left|\sum_{x_t\not\in\{0,x_1,\cdots,x_{t-1}\}}\prod_{1\le i\le t}\chi_{it}(x_ix_t+1)\right|.$$
For $S_{\rm good}^*$, assembling (\ref{Eq. number of good tuples}) and (\ref{Eq. bound for sum related to good tuples}) gives 
\begin{align}\label{Eq. bound for S star good}
	S_{\rm good}^*
&\le q^{m-t}\sum_{(x_1, \cdots, x_{t-1})\in\mathcal{D}_{t-1}^{\rm good}}\left(t+\left|\sum_{x_t\in\mathbb{F}_q}\prod_{1\le i\le t}\chi_{it}(x_ix_t+1)\right|\right)\notag\\
&\le q^{m-t}\cdot \#\mathcal{D}_{t-1}^{\rm good}\cdot \left(tq^{1/2}+t\right)\notag\\
&\le q^{m-1}\left(t+tq^{1/2}\right)\notag\\
&\le q^{m-1}(m+mq^{1/2}).
\end{align}
For $S_{\rm bad}^*$, applying (\ref{Eq. number of bad tuples}) we obtain 
\begin{align}\label{Eq. bound for S star bad}
	S_{\rm bad}^*
&\le \sum_{(x_1, \cdots, x_{t-1})\in\mathcal{D}_{t-1}^{\rm bad}}\sum_{x_t\not\in\{0,x_1,\cdots,x_{t-1}\}}\left|\prod_{1\le i\le t}\chi_{it}(x_ix_t+1)\right|\notag\\
&\le q^{m-t}\cdot \#\mathcal{D}_{t-1}^{\rm bad}\cdot (q-t)\notag\\
&\le q^{m-t}\cdot 2(t-1)q^{t-2}\cdot (q-t)\notag\\
&\le 2mq^{m-1}.
\end{align}
By (\ref{Eq. bound for S star good}), (\ref{Eq. bound for S star bad}) and the case $t=1$, we obtain 
\begin{equation}\label{Eq. bound for S star}
	\left|S^*(\bchi)\right|\le mq^{m-1/2}+3mq^{m-1}
\end{equation}
for any $1\le t\le m$. 
	
	Next we consider $|S^*(\bchi)-S(\bchi)|$. For any $\u=(x_1,\cdots,x_t)\in\mathcal{D}_t$, let 
	\begin{align*}
		E_1(\u)&=\left\{(x_{t+1},\cdots,x_m)\in(\mathbb{F}_q)^{m-t}: x_i=0\ \text{for some}\ t+1\le i\le m\right\},\\
		E_2(\u)&=\left\{(x_{t+1},\cdots,x_m)\in(\mathbb{F}_q)^{m-t}: x_i=x_j\ \text{for some}\ 1\le i<j\le m\ \text{with}\ j>t\right\},\\
		E_3(\u)&=\left\{(x_{t+1},\cdots,x_m)\in(\mathbb{F}_q)^{m-t}: x_ix_j+1=0\ \text{for some}\ 1\le i<j\le m\ \text{with}\ j>t\right\},\\
		E_4(\u)&=\left\{(x_{t+1},\cdots,x_m)\in(\mathbb{F}_q)^{m-t}: x_j^2+1=0\ \text{for some}\ t+1\le j\le m\right\}.
	\end{align*}
	Then it is clear that 
	$\#E_1(\u)\le (m-t)q^{m-t-1}\le mq^{m-t-1}$, $\#E_2(\u)\le m(m-1)q^{m-t-1}/2$, $\#E_3(\u)\le m(m-1)q^{m-t-1}/2$, and $\#E_4(\u)\le 2(m-t)q^{m-t-1}\le 2mq^{m-t-1}$. Let
	$$I_{\u}(x_{t+1},\cdots,x_m)=\begin{cases}
		1  & \mbox{if}\ (x_{t+1},\cdots,x_m)\in E_1(\u)\cup E_2(\u) \cup E_3(\u) \cup E_4(\u),\\
		0 & \mbox{otherwise}.
	\end{cases}$$
	Then, applying the above discussions, by (\ref{Eq. definition of S star}) one can verify that 
	\begin{align}\label{Eq. bound for S star -S}
		\left|S^*(\bchi)-S(\bchi)\right|
&=\left|\sum_{(x_1, \cdots, x_t)\in\mathcal{D}_t}\prod_{1\le i\le j\le t}\chi_{ij}(x_ix_j+1)\sum_{(x_{t+1},\cdots,x_m)\in(\mathbb{F}_q)^{m-t}}I_{\u}(x_{t+1},\cdots,x_m)\right|\notag\\
&\le \sum_{(x_1, \cdots, x_t)\in\mathcal{D}_t}\left|\prod_{1\le i\le j\le t}\chi_{ij}(x_ix_j+1)\sum_{(x_{t+1},\cdots,x_m)\in(\mathbb{F}_q)^{m-t}}I_{\u}(x_{t+1},\cdots,x_m)\right|\notag\\
&\le  \sum_{(x_1, \cdots, x_t)\in\mathcal{D}_t}\#\bigcup_{1\le j\le 4}E_j(\u)\notag\\
&\le \sum_{(x_1, \cdots, x_t)\in\mathcal{D}_t}\sum_{1\le j\le 4}\#E_j(\u)\notag\\
&=\#\mathcal{D}_t\sum_{1\le j\le 4}\#E_j(\u)\notag\\
&\le q^t(3mq^{m-t-1}+m(m-1)q^{m-t-1})\notag\\
&=(m^2+2m)q^{m-1}.
	\end{align}
	
Now assembling (\ref{Eq. bound for S star}) and (\ref{Eq. bound for S star -S}) gives 
\begin{equation}\label{Eq. bound for S with nontrivial chi}
	\left|S(\bchi)\right|\le \left|S^*(\bchi)-S(\bchi)\right|+\left|S^*(\bchi)\right|\le mq^{m-1/2}+(m^2+5m)q^{m-1}
\end{equation}
for any $\bchi\neq\bchi_0$. 

{\bfseries\noindent Part 4}: an estimate for $N_m$. 
	
	Recall that $c_{\bchi}$ is defined in (\ref{Eq. setup of the counting function}). We first consider the sum 
	$$\sum_{\bchi\in\mathcal{M}}|c_{\bchi}|.$$ 
	For any positive divisor $d$ of $q-1$, it is easy to see that 
	$$\#\left\{\chi\in\widehat{\mathbb{F}_q^*}: \ord(\chi)=d\right\}=\varphi(d).$$
	Applying this, one can verify that 
	\begin{align}\label{Eq. bound for sum of c chi}
		\sum_{\bchi=(\chi_{ij})\in\mathcal{M}}|c_{\bchi}|
&=\sum_{\bchi=(\chi_{ij})\in\mathcal{M}}\prod_{1\le i\le j\le m}|c_{\chi_{ij}}|\notag\\
&=\prod_{1\le i\le j\le m}\sum_{\chi_{ij}\in\widehat{\mathbb{F}_q^*}}|c_{\chi_{ij}}|\notag\\
&=\prod_{1\le i\le j\le m}\sum_{d\mid q-1}\frac{|\mu(d)|}{\varphi(d)}\#\left\{\chi\in\widehat{\mathbb{F}_q^*}: \ord(\chi)=d\right\}\notag\\
&=\prod_{1\le i\le j\le m}\sum_{d\mid q-1}|\mu(d)|\notag\\
&=\prod_{1\le i\le j\le m}W_{q-1}\notag\\
&=W_{q-1}^{m(m+1)/2}.
	\end{align}
	
	Now applying (\ref{Eq. estimate for the main term}), (\ref{Eq. bound for S with nontrivial chi}) and (\ref{Eq. bound for sum of c chi}) to (\ref{Eq. setup of the counting function}), we obtain 
	\begin{align}\label{Eq. estimate for Nm}
		m!\cdot N_m/\theta_{q-1}^{m(m+1)/2}
&=\sum_{\bchi\in\mathcal{M}}c_{\bchi}\cdot S(\bchi)\notag\\
&=S(\bchi_0)+\sum_{\bchi\in\mathcal{M}\setminus\{\bchi_0\}}c_{\bchi}\cdot S(\bchi)\notag\\
&\ge |S(\bchi_0)|-\sum_{\bchi\in\mathcal{M}\setminus\{\bchi_0\}}\left|c_{\bchi}\cdot S(\bchi)\right|\notag\\
&\ge q^m-q^{m-1}(m^2+2m)-\left(mq^{m-1/2}+(m^2+5m)q^{m-1}\right)\sum_{\bchi\in\mathcal{M}}|c_{\bchi}|\notag\\
&= q^m-q^{m-1}(m^2+2m)-\left(mq^{m-1/2}+(m^2+5m)q^{m-1}\right)W_{q-1}^{m(m+1)/2}\notag\\
&=q^{m-1}\left(q-(m^2+2m)-(m\sqrt{q}+m^2+5m)W_{q-1}^{m(m+1)/2}\right).
	\end{align}
	
{\bfseries\noindent Part 5}: the asymptotic formula for $N_m$.
	
From the above results, we see that 
$$m!\cdot N_m=\theta_{q-1}^{m(m+1)/2}\cdot S(\bchi_0)+\theta_{q-1}^{m(m+1)/2}\cdot\sum_{\bchi\in\mathcal{M}\setminus\{\bchi_0\}}c_{\bchi}\cdot S(\bchi).$$

For the term $\theta_{q-1}^{m(m+1)/2}\cdot S(\bchi_0)$, it follows from (\ref{Eq. estimate for the main term}) that
\begin{equation}\label{Eq. main term in asymptotic formula}
	\theta_{q-1}^{m(m+1)/2}\cdot S(\bchi_0)=\theta_{q-1}^{m(m+1)/2}\cdot q^m+O_m(q^{m-1}).
\end{equation}

Next we turn to the term $\theta_{q-1}^{m(m+1)/2}\cdot\sum_{\bchi\in\mathcal{M}\setminus\{\bchi_0\}}c_{\bchi}\cdot S(\bchi)$. Note that $\theta_{q-1}\le 1$. By (\ref{Eq. bound for S with nontrivial chi}), (\ref{Eq. bound for sum of c chi}) and (\ref{Eq. bound for W(q-1)}), for any real numbers $r\in (0, 1/2)$, when $q$ is sufficiently large, one can verify that 
\begin{align}\label{Eq. error term in asymptotic formula}
	\left|\theta_{q-1}^{m(m+1)/2}\cdot\sum_{\bchi\in\mathcal{M}\setminus\{\bchi_0\}}c_{\bchi}\cdot S(\bchi)\right|
&\le \left(mq^{m-1/2}+(m^2+5m)q^{m-1}\right)W_{q-1}^{m(m+1)/2}\notag\\
&\ll_{m,r}q^{m-\frac{1}{2}+r}.
\end{align}

Now we consider the quotient $q^{m-\frac{1}{2}+r}/(\theta_{q-1}^{m(m+1)/2}\cdot q^m)$, where $r\in(0, 1/2)$. Applying Lemma \ref{Lem. on low bound for theta}, when $q$ is large enough, we have the inequality 
$$\frac{1}{\theta_{q-1}}=\frac{q-1}{\varphi(q-1)}<e^C\cdot \log\log (q-1)+\frac{2.50637}{\log\log (q-1)}<2e^C\cdot \log\log (q-1)<2e^C\cdot \log\log q.$$
From this, one can verify that 
\begin{align*}
	\frac{q^{m-\frac{1}{2}+r}}{\theta_{q-1}^{m(m+1)/2}\cdot q^m}<(2e^{C})^{m(m+1)/2}\cdot \frac{(\log\log q)^{m(m+1)/2}}{q^{\frac{1}{2}-r}}.
\end{align*}
Since $0<r<1/2$, the above inequality implies that 
$$\lim_{q\rightarrow\infty}\frac{q^{m-\frac{1}{2}+r}}{\theta_{q-1}^{m(m+1)/2}q^m}=0.$$
Combining this with (\ref{Eq. main term in asymptotic formula}) and (\ref{Eq. error term in asymptotic formula}), we obtain the asymptotic formula
$$m!\cdot N_m=\left(\frac{\varphi(q-1)}{q-1}\right)^{m(m+1)/2}q^m+O_{m,r}\left(q^{m-\frac{1}{2}+r}\right).$$

{\bfseries\noindent Part 6}: the existence of $\mathcal{P}$-Diophantine $m$-tuples over $\mathbb{F}_q$.

First we prove that 
\begin{equation}\label{Eq. an elementary inequality involving m}
	(m+1)2^{m(m+1)/2}>2m^2+7m
\end{equation}
for any integer $m\ge 2$. In fact, (\ref{Eq. an elementary inequality involving m}) holds trivially for $m=2$. Suppose now $m\ge 3$. By induction on $m$, it is easy to verify that $4^m\ge m^3$ for any integer $m\ge 3$. Applying this we obtain 
$$(m+1)\cdot 2^{m(m+1)/2}\ge 4\cdot 2^{2m}\ge 4m^3=m(2m^2)+(2m^2)m>2m^2+7m.$$
Hence (\ref{Eq. an elementary inequality involving m}) holds for any $m\ge 2$.

Suppose now $q\ge \exp(\exp(m(m+1)))$. Then by Lemma \ref{Lem. an inequality in proof of Thm. A} we have 
$$\sqrt{q}\ge (m+1)W_{q-1}^{m(m+1)/2}.$$
Combining this with (\ref{Eq. an elementary inequality involving m}) and noting that $W_{q-1}\ge2$, one can verify that 
	\begin{align*}
	    &q-(m^2+2m)-(m\sqrt{q}+m^2+5m)W_{q-1}^{m(m+1)/2}\\
\ge &(m+1)\sqrt{q}W_{q-1}^{m(m+1)/2}-(m^2+2m)-(m\sqrt{q}+m^2+5m)W_{q-1}^{m(m+1)/2}\\
  =  &W_{q-1}^{m(m+1)/2}\left(\sqrt{q}-\left((m^2+5m)+\frac{m^2+2m}{W_{q-1}^{m(m+1)/2}}\right)\right)\\
 \ge&W_{q-1}^{m(m+1)/2}\left((m+1)W_{q-1}^{m(m+1)/2}-(2m^2+7m)\right)\\
 \ge &W_{q-1}^{m(m+1)/2}\left((m+1)2^{m(m+1)/2}-(2m^2+7m)\right)\\
    > &0.
	\end{align*}
	Applying this to (\ref{Eq. estimate for Nm}), we obtain $N_m>0$ whenever $q\ge \exp(\exp(m(m+1)))$.
	
	In view of the above, we have completed the proof of Theorem \ref{Thm. A}. \qed

	\Ack\ This research was supported by the Natural Science Foundation of China (Grant No. 12101321) and the Natural Science Foundation of the Higher Education Institutions of Jiangsu Province (Grant No. 25KJB110010).


\begin{thebibliography}{99}
	
	\bibitem{Cohen03} S. D. Cohen and S. Huczynska, The primitive normal basis theorem --Without a	computer, J. Lond. Math. Soc. 67 (2003), 41--56.
	
	\bibitem{Cohen10} S. D. Cohen and S. Huczynska, The strong primitive normal basis theorem, Acta Arith. 143 (2010), 299--332.
		
	\bibitem{Cohen15} S. D. Cohen, T. Oliveira e Silva and T. Trudgian, On consecutive primitive elements in a finite field, Bull. London Math. Soc. 47 (2015), 418--426.
	
	\bibitem{Cohen21} S. D. Cohen, H. Sharma and R. Sharma, Primitive values of rational functions at primitive elements of a finite field, J. Number Theory 219 (2021), 237--246.
	
	\bibitem{Dujella04} A. Dujella, There are only finitely many Diophantine quintuples, J. Reine Angew. Math. 566 (2004), 183--214.
	
	\bibitem{Dujella24} A. Dujella, Diophantine $m$-tuples and elliptic curves, Developments in Mathematics, vol. 79, Springer, 2024.
	
	\bibitem{DK} A. Dujella and M. Kazalicki, Diophantine $m$-tuples in finite fields and modular forms, Res. Number Theory 7 (2021), 1--24.
	
	\bibitem{DP} A. Dujella and V. Petri\v{c}evi\'c, Strong Diophantine triples, Exp. Math. 17(2008), 83--89.
	
	\bibitem{Gu} Z. J. Gu, A generalisation of diophantine tuples, Ramanujan J. 72 (2026), Article 79.
	
	\bibitem{HTZ} B. He, A. Togb\'e and V. Ziegler, There is no Diophantine quintuple, Trans. Am. Math. Soc. 371 (2019), 6665--6709.
	
	\bibitem{KYY} S. Kim, C. H. Yip and S. Yoo, Multiplicative structure of shifted multiplicative subgroups and its applications to Diophantine tuples, Canad. J. Math., to appear.
	
	\bibitem{LN} R. Lidl and H. Niederreiter, Finite Fields, 2nd edition, Cambridge University Press, Cambridge, 1997.
	
	\bibitem{MR} N. Mani and S. Rubinstein-Salzedo, Diophantine tuples over $\mathbb{Z}_p$, Acta Arith. 197 (2021), 331--351.
	
	\bibitem{Robin} G. Robin, Estimation de la fonction de Tchebychef $\theta$ sur le $k$-i\`eme nombre premier et grandes valeurs de
	la fonction $\omega(n)$ nombre de diviseurs premiers de $n$, Acta Arith. 42 (1983), 367--389.
	
	\bibitem{RS} J. B. Rosser and L. Schoenfeld, Approximate formulas for some functions of prime numbers, Illinois J. Math. 6 (1962), 64--94.
	
	\bibitem{Shparlinski} I. E. Shparlinski, On the number of Diophantine m-tuples in finite fields, Finite Fields Appl. 90 (2023), Article 102241.
	
	\bibitem{TY}  K. M. Tsang and C. H. Yip, Bipartite Diophantine tuples and their applications, Res. Number Theory Paper 12 (2026), Article 16. 
	\end{thebibliography}
\end{document}